\newtheorem{theorem}{Theorem}[section]
\newtheorem{proposition}[theorem]{Proposition} 
\newtheorem{lemma}[theorem]{Lemma}             
\newtheorem{corollary}[theorem]{Corollary}       
\theoremstyle{definition}
\newtheorem{definition}[theorem]{Definition}   
\theoremstyle{remark}
\newtheorem{example}[theorem]{Example}         
\newtheorem*{examplesn}{Example~\ref{exStan} (continued)}
\newtheorem*{examplef}{Example~\ref{exStan} (conclusion)}
\DeclareMathOperator\inv{inv}
\DeclareMathOperator\maxinv{maxinv}
\newcommand\R{\mathbb{R}}
\newcommand\N{\mathbb{N}}
\newcommand\fS{\mathfrak{S}}
\newcommand\fI{\mathfrak{I}}
\newcommand\cS{\mathcal{S}}
\newcommand\tw{\tilde{w}}
\newcommand\tf{\tilde{f}}
\newcommand\tI{\tilde{I}}
\newcommand\tfI{\tilde{\mathfrak{I}}}
\newcommand\PFA{\operatorname{PF}\mkern-8mu{}_A}
\newcommand\CFA{\operatorname{CF}\mkern-8mu{}_A}
\newcommand\WA{\operatorname{W}\mkern-8mu{}_A}
\newcommand\phiA{\varphi\mkern-4mu{}_A}
\newcommand\WZ{\operatorname{W}\mkern-6mu{}_Z}
\newcommand\phiZ{\varphi\mkern-2mu{}_Z}
\newcommand\WtZ{\operatorname{W}\mkern-6mu{}_{\tilde{Z}}}
\newcommand\phitZ{\varphi\mkern-2mu{}_{\tilde{Z}}}
\DeclareMathOperator\ct{C_A}
\newcommand\tvs{\strut\negthickspace\text{\Large\textvisiblespace}\negthickspace}
\newcommand\pequr[2]{$\llap{\scriptsize$#1-#2=1$}$}
\newcommand\peqzr[2]{$\llap{\scriptsize$#1-#2=0$}$}
\newcommand\sr[2]{\begin{rotate}{#2}#1\end{rotate}}
\newcommand\ob[1]{\widehat{#1}}
\newcommand\w[2]{w\langle#1\!:\!#2\rangle}
\title{Note on the bijectivity of the Pak-Stanley labelling}
\author[R.~Duarte]{Rui Duarte}
\address{Center for Research and Development in Mathematics and Applications,
Department of Mathematics, University of Aveiro}
\email{rduarte@ua.pt}
\author[A.~Guedes~de~Oliveira]{Ant\'onio Guedes
  de Oliveira}
\address{CMUP and Department of Mathematics, Faculty of Sciences,
University of Porto}
\email{agoliv@fc.up.pt}
\thanks{The work of both authors was supported in part by the European
  Regional Development Fund through the program COMPETE - Operational
  Program Factors of Competitiveness (``Programa Operacional Factores
  de Competitividade'') and by the Portuguese government through FCT -
  Funda\c{c}\~ao para a Ci\^encia e a Tecnologia, under the projects
  PEst-C/MAT/UI0144/2014 and PEst-C/MAT/UI4106/2014.}
\begin{document}

\maketitle

\section{Introduction}
\noindent
This article has the sole purpose of presenting a simple,
self-contained and direct proof of the fact that the Pak-Stanley
labeling is a bijection.  The construction behind the proof is
subsumed in a forthcoming paper \cite{DGO}, but an actual
self-contained proof is not explicitly included in that paper.

Let $n$ be a natural number and consider the \emph{Shi arrangement of
  order $n$}, the union $\cS_n$ of the hyperplanes of $\R^n$ defined,
for every $1\leq i<j\leq n$, either by equation $x_i-x_j=0$ or by
equation $x_i-x_j=1$. The \emph{regions} of the arrangement are the
connected components of the complement of $\cS_n$ in
$\R^n$. Jian~Yi~Shi \cite{Shi} introduced in literature this
arrangement of hyperplanes and showed that the number of regions is
$(n+1)^{n-1}$.

On the other hand, $(n+1)^{n-1}$ is also the number of \emph{parking
  functions of size $n$}, which were defined (and counted) by
Alan~Konheim and Benjamin~Weiss \cite{KW}.  These are the functions
$f\colon[n]\to[n]$ such that,
$$\forall_{i\in[n]}\,,\ |f^{-1}([i])|\geq i$$
or, equivalently, such that, for some $\pi\in\fS_n$, $f(i)\leq \pi(i)$
for every $i\in[n]$ (as usual, $[n]:=\{1,\dotsc,n\}$ and $\fS_n$ is
the set of permutations of $[n]$).

The Pak-Stanley labeling \cite{stan2} consists of a function $\lambda$
from the set of regions of $\cS_n$ to the set of parking functions of
size $n$.

We define $[0]:=\varnothing$ and, for $i,j\in\N$,
$[i,j]:=[j]\setminus[i-1]$, so that $[i,j]=\{i,i+1,\dotsc,j\}$ if
$i\leq j$ and $[i,j]=\varnothing$ otherwise. Finally, $[i]=[1,i]$ for
every integer $i\geq 0$ as stated before.

Let $A\subseteq[n]$, say $A=:\{a_1,\dotsc,a_m\}$ with $a_1<\dotsb<a_m$
and let $\WA$ be the set of words of form $w=a_{\alpha_1}\dotsb
a_{\alpha_m}$ for some permutation $\alpha\in\fS_m$. If $1\leq i<j\leq
m$, we distinguish the subword $\w{i}{j}:=a_{\alpha_i}\dotsb
a_{\alpha_j}$ from the set $w([i,j]):=\{a_{\alpha_i},\dotsc,
a_{\alpha_j}\}$. Similarly, we define $w^{-1}\colon A\to[m]$ through
$w^{-1}(w_i)=i$ for every $i\in[m]$.

\begin{definition}\label{contract}
  Given a word $w=w_1\dotsb w_k\in\WA$ and a set
  $\fI=\{[o_1,c_1],\dotsc,[o_k,c_k]\}$ with $1\leq o_i<c_i\leq m$ for
  every $i\in[k]$ and $o_1<o_2<\dotsb<o_k$, we say that the pair
  $P=(w,\fI)$ is a \emph{valid pair} if
\begin{itemize}
\item $w_{o_i}>w_{c_i}$ for every $i\in[k]$;
\item $c_1<c_2<\dotsb<c_k$.
\end{itemize}
An \emph{$A$-parking function} is a function $f\colon A\to[m]$ for
which
\begin{equation}\label{central}
\forall_{j\in[m]}\,,\ |f^{-1}([j])|\geq j\,.
\end{equation}
We denote by $\PFA$ the set of $A$-parking functions.  Of course, for
$f\colon A\to[m]$, $f\in\PFA$ if and only if $f\circ\iota_A$ is a
parking function, where $\iota_A\colon[m]\to A$ is such that
$\iota_A(i)=a_i$.  A particular case occurs when
$$\forall_{j\in[m]}\,,\ f(a_j)\leq j\,.$$
In this case, we say that $f$ is \emph{$A$-central}. We denote by
$\CFA$ the set of $A$-central parking functions.  We call
\emph{contraction of $w$} to the new function $\ob{w}\colon A\to[m]$
such that
\begin{align}
  &\ob{w}(a):=w^{-1}(a)-\Big|\big\{b\in A\mid b>a\,,\
  w^{-1}(b)<w^{-1}(a) \big\}\Big|\,.
\end{align}
Note that indeed $\ob{w}\in\CFA$, since
$\ob{w}(a)=\Big|w([w^{-1}(a)])\cap[a]\Big|$.
\end{definition}

For example, $\ob{843967}=
\overset{{\scriptscriptstyle3}}{1}\overset{{\scriptscriptstyle4}}{1}%
\overset{{\scriptscriptstyle6}}{3}\overset{{\scriptscriptstyle7}}{4}%
\overset{{\scriptscriptstyle8}}{1}\overset{{\scriptscriptstyle9}}{4}$.
In fact, $\ob{843967}(3)=1$ since $w^{-1}(3)=3$ and
$w([3])\cap[3]=\{8,4,3\}\cap[3]=\{3\}$, but, for instance,
$\ob{843967}(6)=3$ since $w^{-1}(6)=5$ and $w([5])\cap[6]=\{3,4,6\}$.

When $A=[n]$, the $A$-central parking functions are simply
\emph{central} parking functions.

\section{The Pak-Stanley labeling}

Igor Pak and Richard Stanley \cite{stan2} created a (bijective)
labeling of the regions of the Shi arrangement with parking functions
that may be defined as follows.

Consider, for a point $x=(x_1,\dotsc,x_n)\in\R^n\setminus\cS_n$, the
(unique) permutation $w\in\fS_n$ such that
$x_{w_1}<\dotsb<x_{w_n}$\footnote{Note that the order is reversed
  relatively to Stanley's paper \cite{stan2}.}, and consider the set
$\fI=\big\{[o_1,c_1],\dotsc,[o_m,c_m]\big\}$ of all \emph{maximal}
intervals $I_i=[o_i,c_i]$ with $o_i<c_i$ for $i=1,\dotsc,k$, such that
\begin{itemize}
\item $w_{o_i}>w_{c_i}$;
\item for every $\ell,m\in I_i$ with $\ell<m$ and $w_\ell>w_m$,
  $0<x_{w_m}-x_{w_\ell}<1$\footnote{The fact that
    $0<x_{w_m}-x_{w_\ell}$ already follows from the fact that
    $w_\ell>w_m$.}.

\end{itemize}

Then, clearly $(w,\fI)$ is a valid pair that does not depend on the
particular point $x$ that we have chosen. More precisely, if a similar
construction is based on a different point $y\in\R^n\setminus\cS_n$
then at the end we obtain the same valid pair if and only if $x$ and
$y$ are in the same region of $\cS_n$.  Finally, it is not difficult
to see that every valid pair corresponds in this way to a (unique)
region of $\cS_n$.
\begin{example}[{\cite[example p.\ 484, ad.]{stan1}}]\label{exStan}
  Let $w=843967125$ and $\fI=\{[1,6],[3,8],[6,9]\}$. The valid pair
  $(w,\fI)$ corresponds to the region
\begin{align*}
  \Big\{(x_1,\dotsc,x_9)\in\mathbb{R}^9\;\mid\;
  &x_{8}<x_4<x_{3}<x_9<x_6<x_{7}< x_1<x_{2}<x_{5},\\
  & x_8+1>x_7,\,x_3+1>x_2,\, x_7+1>x_5\,,\\[-3pt]
  &x_4+1<x_1,\, 
  x_6+1<x_5\Big\}
\end{align*}
where also $x_8+1>x_6$ (since $x_7>x_6$) and $x_8+1<x_1$ (since
$x_8<x_4$), for example.
\end{example}

Let $R_0$ be the region corresponding to the valid pair $(w,\fI)$
where $w=n(n-1)\dotsb\,2\,1$ and $\fI=\{[1,n]\}$, so that
$(x_1,\dotsc,x_n)\in R_0$ if and only if $0<x_i-x_j<1$ for every
$0\leq i<j\leq n$.

In the Pak-Stanley labeling $\lambda$, the label of $R_0$ is, using
the one-line notation, $\lambda(R_0)=1\,1\dotsb1$.  Furthermore,
\begin{itemize}
\item if the only hyperplane that separates two regions, $R$ and $R'$, has
  equation $x_i=x_j$ ($i<j$) 
  and $R_0$ and $R$ lie in the same side of this plane,
  then $\lambda(R')=\lambda(R)+e_j$ (as usual, the $i$-th coordinate
  of $e_j$ is either $1$, if $i=j$, or $0$, otherwise);
\item if the only hyperplane that separates two regions, $R$ and $R'$, has
  equation $x_i=x_j+1$ ($i<j$) and $R_0$ and $R$ lie in the same side
  of this plane, then $\lambda(R')=\lambda(R)+e_i$.
\end{itemize}

Thus, given a region $R$ of $\cS_n$ with associated valid pair
$P=(w,\{[o_1,c_1],\dotsc,[o_m,c_m]\})$, if $f=\lambda(R)$ and
{\boldmath $i=w_j$}, then, counting the planes of equation
$x_{w_k}-x_i=0$ or $x_i-x_{w_k}=1$ that separate $R$ and $R_0$,
respectively, we obtain (cf. \cite{stan2})
\begin{align}
\begin{split}\label{pdef}
  &f_i=1+\Big|\big\{k<j\mid w_k<i\big\}\Big|\\
  &\hphantom{f_i=1}{}+\Big|\big\{k<j\mid w_k>i\,,\ \text{no $\ell\in[m]$
    satisfies $j,k\in [o_\ell,c_\ell]$}\big\}\Big|\,.
\end{split}
  \intertext{Hence, if $j\notin [o_1,c_1],\dotsc,[o_m,c_m]$,}
  &f_i=j\,;\label{defp1}\\
  \intertext{in this case, \emph{let $o_P(i)=o_P(w_j):=j$}. Otherwise, if $k\leq m$ is
    the least integer for which $j\in[o_k,c_k]$,} &f_i=
  o_k-1+\ob{\w{o_k}{c_k}}(i)\,.
  \label{defp2}
\end{align}
and we \emph{define $o_P(i):=o_k$}.

In Figure~\ref{fig1}, we represent $\cS_3$ with each region $R$
labeled with $\lambda(R)$.

By requiring the validity of equations \eqref{defp1} and \eqref{defp2}
under the same conditions, we extend $\lambda$ to every valid pair
$P=(w,\fI)$, where $w\in\WA$ for some $A\subseteq[n]$. Note that in
this way we still obtain an $A$-parking function $f=\lambda(w,\fI)$.

Moreover, if $1\leq k<\ell\leq|A|$ then $o_P(w_k)\leq o_P(w_\ell)$.
If, in addition, $w_k>w_\ell$, then
\begin{equation}\label{comp}f(w_k)\leq f(w_\ell)\,.\end{equation}
In fact, $f(w_\ell)=\ell-\big|\big\{o_P(w_\ell)\leq j\leq\ell\mid
w_j>w_\ell\big\} \big|\geq k-\big|\big\{o_P(w_k)\leq j\leq k\mid
w_j>w_k\big\}\big|=f(w_k)$, since the size of the set
$\big\{o_P(w_\ell)\leq j\leq\ell\mid w_j>w_\ell\big\}\setminus
\big\{o_P(w_k)\leq j\leq k\mid w_j>w_k\big\}$, which is equal to
$\big\{k<j\leq\ell\mid w_\ell<w_j\leq w_k\big\}$, is clearly less than
or equal to $\ell-k$.

\begin{examplesn}
  Let again $R$ be the region of $\cS_9$ associated with the valid
  pair $\big(843967125,\{[1,6],[3,8],[6,9]\}\big)$.  Writing with a
  variant of Cauchy's two-line notation, we have, corresponding to the
  intervals $[1,6]$, $[3,8]$ and $[6,9]$, respectively,
  $\w{1}{6}=843967$ and $f_1=\ob{843967}=
  \overset{{\scriptscriptstyle3}}{1}\overset{{\scriptscriptstyle4}}{1}%
  \overset{{\scriptscriptstyle6}}{3}\overset{{\scriptscriptstyle7}}{4}%
  \overset{{\scriptscriptstyle8}}{1}\overset{{\scriptscriptstyle9}}{4}$,
  $f_2=\ob{396712}=
  \overset{{\scriptscriptstyle1}}{1}\overset{{\scriptscriptstyle2}}{2}%
  \overset{{\scriptscriptstyle3}}{1}\overset{{\scriptscriptstyle6}}{2}%
  \overset{{\scriptscriptstyle7}}{3}\overset{{\scriptscriptstyle9}}{2}$,
  $f_3=\ob{7125}=
  \overset{{\scriptscriptstyle1}}{1}\overset{{\scriptscriptstyle2}}{1}%
  \overset{{\scriptscriptstyle5}}{3}\overset{{\scriptscriptstyle7}}{1}$
  and, finally, $f=\lambda(R)=341183414$, which we also write
  $\vphantom{\text{\large$\int_I^I$}}\rnode{c8}{8}4\rnode{c3}{3}96
  \rnode{c7}{7}1\rnode{c2}{2}\rnode{c5}{5}
  \nccurve[nodesep=.035,ncurv=0.35,angleB=90,angleA=90]{-}{c8}{c7}
  \nccurve[nodesep=.035,ncurv=0.35,angleB=90,angleA=90]{-}{c7}{c5}
  \nccurve[nodesep=.035,ncurv=0.35,angleB=90,angleA=90]{-}{c3}{c2}$
  \footnote{Note that, for example, the central parking function
    $1132=\ob{2413}$ corresponds to $\rnode{c2}{2}\rnode{c4}{4}\rnode{c1}{1}\rnode{c3}{3}$.
    \nccurve[nodesep=.055,ncurv=0.55,angleB=90,angleA=90]{-}{c2}{c1}%
    \nccurve[nodesep=.055,ncurv=0.55,angleB=90,angleA=90]{-}{c4}{c3}}
  (cf. Figure~\ref{fig1}).

  Similarly, for $A=[9]\setminus\{8,4\}$, we may consider
  $f=\lambda\big(3967125,\{[1,6],[4,7]\}\big)$, the $A$-parking
  function $\rnode{d3}{3}96\rnode{d7}{7}1\rnode{d2}{2}\rnode{d5}{5}
  \nccurve[nodesep=.035,ncurv=0.35,angleB=90,angleA=90]{-}{d7}{d5}
  \nccurve[nodesep=.035,ncurv=0.35,angleB=90,angleA=90]{-}{d3}{d2}=
  \overset{{\scriptscriptstyle1}}{1}\overset{{\scriptscriptstyle2}}{2}
  \overset{{\scriptscriptstyle3}}{1}\overset{{\scriptscriptstyle5}}{6}
  \overset{{\scriptscriptstyle6}}{2}\overset{{\scriptscriptstyle7}}{3}
  \overset{{\scriptscriptstyle9}}{2}$.
\end{examplesn}

\begin{figure}[th]
\begin{center}
\bigskip
\setlength{\unitlength}{.55cm}
\begin{picture}(14.,15.)(0.,-14.)
\psset{unit=.6cm}
\psset{dotsize=.1 .1}
\rput(13.,-5.25){\sr{\pequr{x}{y}}{-20}}
\rput(13.,-6.75){\sr{\peqzr{x}{y}}{17.5}}
\rput(9.567,-0.553){\sr{\pequr{x}{z}}{45}}
\rput(10.867,-1.304){\sr{\peqzr{x}{z}}{80}}
\rput(9.133,-11.696){\sr{\pequr{y}{z}}{-40}}
\rput(10.433,-10.946){\sr{\peqzr{y}{z}}{-75}}
\rput(14.5,-8){\sr{{\small\red$\infty$}}{75}}
\pscircle[linecolor=red](7,-6){7.}
\pscurve[linecolor=black,showpoints=true,linewidth=.07]{-}(3.5,0.02)(7,
-4)(8.74,-7)(10.5,-12.02)
\pscurve[linecolor=black,linewidth=.04]{-}(3.5,0.02)(5.26,-7)(10.5,-12.02)
\pscurve[linecolor=black,linewidth=.04]{-}(10.5,0.02)(7,-4)(5.26,-7)(3.5,
-12.02)
\pscurve[linecolor=black,showpoints=true,linewidth=.07]{-}(10.5,0.02)(8.74,
-7)(3.5,-12.02)
\pscurve[linecolor=black,linewidth=.04]{-}(0,-6)(7,-4)(14,-6)
\pscurve[linecolor=black,showpoints=true,linewidth=.07]{-}(0,-6)(5.26,
-7)(8.74,-7)(14,-6)
\rput(7,-1.){$\begin{array}{c}
\ \\[-5pt]
312\\
\rnode{a1}{\scriptstyle\red 2}\,
\rnode{a2}{\scriptstyle\red 3}\,
\rnode{a3}{\scriptstyle\red 1}
\end{array}$}
\rput(7,-5.75){$\begin{array}{c}
\ \\[-5pt]
111\\
\rnode{a1}{\scriptstyle\red 3}\,
\rnode{a2}{\scriptstyle\red 2}\,
\rnode{a3}{\scriptstyle\red 1}
\nccurve[linecolor=red,nodesep=.05,ncurv=0.75,angleB=90,angleA=90]{-}{a1}{a3}
\end{array}$}
\rput(7,-7.75){$\begin{array}{c}
\ \\[-7.5pt]
121\\
\rnode{a1}{\scriptstyle\red 3}\,
\rnode{a2}{\scriptstyle\red 1}\,
\rnode{a3}{\scriptstyle\red 2}
\nccurve[linecolor=red,nodesep=.05,ncurv=0.75,angleB=90,angleA=90]{-}{a1}{a3}
\end{array}$}
\rput(7,-11){$\begin{array}{c}
\ \\[-5pt]
132\\
\rnode{a1}{\scriptstyle\red 1}\,
\rnode{a2}{\scriptstyle\red 3}\,
\rnode{a3}{\scriptstyle\red 2}\\
\end{array}$}
\rput(9.5,-5.){\llap{$\begin{array}{c}
\ \\[-5pt]
112\\
\rnode{a1}{\scriptstyle\red 2}\,
\rnode{a2}{\scriptstyle\red 3}\,
\rnode{a3}{\scriptstyle\red 1}
\nccurve[linecolor=red,nodesep=.05,ncurv=0.75,angleB=90,angleA=90]{-}{a1}{a3}
\end{array}$}}
\rput(4.5,-5.){\rlap{$\begin{array}{c}
\ \\[-5pt]
211\\
\rnode{a1}{\scriptstyle\red 3}\,
\rnode{a2}{\scriptstyle\red 2}\,
\rnode{a3}{\scriptstyle\red 1}
\nccurve[linecolor=red,nodesep=.05,ncurv=0.75,angleB=90,angleA=90]{-}{a1}{a2}
\nccurve[linecolor=red,nodesep=.05,ncurv=0.75,angleB=90,angleA=90]{-}{a2}{a3}
\end{array}$}}
\rput(9.82,-5.5){\rlap{$\begin{array}{c}
\ \\[-5pt]
113\\
\rnode{a1}{\scriptstyle\red 2}\,
\rnode{a2}{\scriptstyle\red 1}\,
\rnode{a3}{\scriptstyle\red 3}
\nccurve[linecolor=red,nodesep=.05,ncurv=0.75,angleB=90,angleA=90]{-}{a1}{a2}
\end{array}$}}
\rput(4.18,-5.5){\llap{$\begin{array}{c}
\ \\[-5pt]
221\\
\rnode{a1}{\scriptstyle\red 3}\,
\rnode{a2}{\scriptstyle\red 2}\,
\rnode{a3}{\scriptstyle\red 1}
\nccurve[linecolor=red,nodesep=.05,ncurv=0.75,angleB=90,angleA=90]{-}{a2}{a3}
\end{array}$}}
\rput(11.8,-2.95){$\begin{array}{c}
\ \\[-5pt]
213\\
\rnode{a1}{\scriptstyle\red 2}\,
\rnode{a2}{\scriptstyle\red 1}\,
\rnode{a3}{\scriptstyle\red 3}\\
\end{array}$}
\rput(2.2,-2.95){$\begin{array}{c}
\ \\[-5pt]
321\\
\rnode{a1}{\scriptstyle\red 3}\,
\rnode{a2}{\scriptstyle\red 2}\,
\rnode{a3}{\scriptstyle\red 1}\\
\end{array}$}
\rput(11.35,-8.3){$\begin{array}{c}
\ \\[-5pt]
123\\
\rnode{a1}{\scriptstyle\red 1}\,
\rnode{a2}{\scriptstyle\red 2}\,
\rnode{a3}{\scriptstyle\red 3}\\
\end{array}$}
\rput(2.65,-8.3){$\begin{array}{c}
\ \\[-5pt]
231\\
\rnode{a1}{\scriptstyle\red 3}\,
\rnode{a2}{\scriptstyle\red 1}\,
\rnode{a3}{\scriptstyle\red 2}\\
\end{array}$}
\rput(9.5,-8.7){\llap{$\begin{array}{c}
\ \\[-5pt]
122\\
\rnode{a1}{\scriptstyle\red 1}\,
\rnode{a2}{\scriptstyle\red 3}\,
\rnode{a3}{\scriptstyle\red 2}
\nccurve[linecolor=red,nodesep=.05,ncurv=0.75,angleB=90,angleA=90]{-}{a2}{a3}
\end{array}$}}
\rput(4.5,-8.7){\rlap{$
\begin{array}{c}
\ \\[-5pt]
131\\
\rnode{a1}{\scriptstyle\red 3}\,
\rnode{a2}{\scriptstyle\red 1}\,
\rnode{a3}{\scriptstyle\red 2}
\nccurve[linecolor=red,nodesep=.05,ncurv=0.75,angleB=90,angleA=90]{-}{a1}{a2}
\end{array}$}}
\rput(9.87,-2.6){\llap{$\begin{array}{c}
\ \\[-5pt]
212\\
\rnode{a1}{\scriptstyle\red 2}\,
\rnode{a2}{\scriptstyle\red 3}\,
\rnode{a3}{\scriptstyle\red 1}
\nccurve[linecolor=red,nodesep=.05,ncurv=0.75,angleB=90,angleA=90]{-}{a2}{a3}
\end{array}$}}
\rput(4.13,-2.6){\rlap{$\begin{array}{c}
\ \\[-5pt]
311\\
\rnode{a1}{\scriptstyle\red 3}\,
\rnode{a2}{\scriptstyle\red 2}\,
\rnode{a3}{\scriptstyle\red 1}
\nccurve[linecolor=red,nodesep=.05,ncurv=0.75,angleB=90,angleA=90]{-}{a1}{a2}
\end{array}$}}
\end{picture}
\caption{Pak-Stanley labeling for $n=3$}
\label{fig1}
\end{center}
\end{figure}
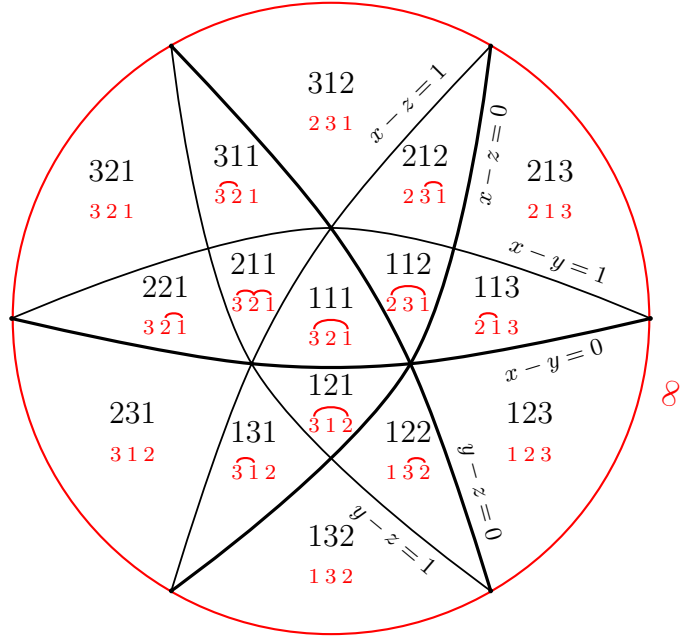

\section{Injectivity of $\lambda$}\label{inj}

\noindent
The proof of the injectivity of $\lambda$ is based on the following
lemma, where a particular case is considered. Beforehand, we introduce
a new concept.

\begin{definition}
  Let $w\in\WA$ for a subset $A$ of $[n]$, consider the poset of
  \emph{inversions of $w$}, $\inv(w):=\{(i,j)\mid i<j, w_i>w_j\}$,
  ordered so that $(i,j)\leq (k,\ell)\ \text{ if and only if }\
  [i,j]\subseteq [k,\ell]$.  Then, define $\maxinv(w)$ as the set of
  maximal elements of $\inv(w)$.
\end{definition}

\begin{lemma}\label{lema1}
  Let $A\subseteq[n]$, $v,w\in\WA$, and suppose that $P=(v,\fI)$ is a
  valid pair. If
$$\lambda(v,\fI)=\ob{w}\,,$$
then $v=w$ and $\fI=\maxinv(v)$.
\end{lemma}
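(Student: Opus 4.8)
The plan is to deduce the statement from two clean facts, needing no induction beyond a trivial one.

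\emph{Step 1: $(v,\maxinv(v))$ is a valid pair, and $\lambda(v,\maxinv(v))=\ob{v}$ for every $v\in\WA$.} That it is a valid pair is clear: no maximal inversion is contained in another, so the maximal inversions have distinct left endpoints and, ordered accordingly, strictly increasing right endpoints, while each is an inversion $v_{o_i}>v_{c_i}$. For the equality, fix a position $p$ and set $o:=o_P(v_p)$ for $P=(v,\maxinv(v))$; thus $o$ is the left endpoint of the least-indexed maximal inversion of $v$ whose interval contains $p$ (and $o=p$ if there is none), and since the left endpoints increase with the index, $o$ is the \emph{smallest} left endpoint of a maximal inversion whose interval contains $p$. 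If some position $q<o$ had $v_q>v_p$, then $(q,p)$ would be an inversion, hence contained in a maximal inversion $[O',C']$ with $O'\le q<o$ and $p\le C'$ — a maximal inversion containing $p$ with left endpoint $<o$, against the choice of $o$. So the $o-1$ positions before $o$ all carry letters smaller than $v_p$, and by \eqref{defp1}--\eqref{defp2}, $\lambda(v,\maxinv(v))(v_p)=(o-1)+\big|\{k\in[o,p]\mid v_k\le v_p\}\big|=\big|\{k\le p\mid v_k\le v_p\}\big|=\ob{v}(v_p)$.

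\emph{Step 2: if $(v,\fI)$ is a valid pair with $\lambda(v,\fI)\in\CFA$, then $\fI=\maxinv(v)$.} For $\maxinv(v)\subseteq\fI$, suppose $[O,C]\in\maxinv(v)\setminus\fI$ and write $f:=\lambda(v,\fI)$, $o^*:=o_P(v_C)$. Were $o^*\le O$, the least-indexed interval $[o^*,c^*]\in\fI$ containing $C$ would satisfy $[o^*,c^*]\supseteq[O,C]$ and, being an inversion (as $\fI$ is a valid pair), would strictly contain the maximal inversion $[O,C]$ (equality is excluded since $[O,C]\notin\fI$) — impossible; hence $o^*>O$. By \eqref{defp1}--\eqref{defp2}, $f(v_C)=(o^*-1)+\big|\{k\in[o^*,C]\mid v_k\le v_C\}\big|$, whereas $A$-centrality gives $f(v_C)\le\big|\{k\in[m]\mid v_k\le v_C\}\big|$. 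Subtracting, and using that position $O<o^*$ carries $v_O>v_C$, forces $\big|\{k>C\mid v_k\le v_C\}\big|\ge1$; picking $k_0>C$ with $v_{k_0}<v_C$ gives $v_O>v_C>v_{k_0}$, so $[O,k_0]$ is an inversion strictly containing $[O,C]$, contradicting maximality. Hence $\maxinv(v)\subseteq\fI$. If now some $[o,c]\in\fI$ were not maximal, it would lie strictly inside some $[O,C]\in\maxinv(v)\subseteq\fI$; but a valid pair cannot contain one interval strictly inside another, since both its left and its right endpoints strictly increase with the index. So $\fI=\maxinv(v)$.

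\emph{Step 3: conclusion.} If $\lambda(v,\fI)=\ob{w}$, then the right-hand side lies in $\CFA$, so Step 2 gives $\fI=\maxinv(v)$ and then Step 1 gives $\ob{w}=\lambda(v,\fI)=\ob{v}$. Finally $w\mapsto\ob{w}$ is injective on $\WA$: since $[a_m]\supseteq A$ we have $\ob{w}(a_m)=w^{-1}(a_m)$, which locates the largest letter $a_m$; deleting $a_m$ from $w$ leaves a word whose contraction is $\ob{w}|_{A\setminus\{a_m\}}$ (the contraction of the other letters being unaffected), so a one-line induction on $|A|$ recovers $w$. Hence $\ob{v}=\ob{w}$ yields $v=w$, completing the proof.

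The only real content is the inequality in Step 2 establishing $\maxinv(v)\subseteq\fI$ — playing $A$-centrality off \eqref{defp2} to produce, just to the right of a maximal inversion $[O,C]$, a letter smaller than $v_C$ that would enlarge $[O,C]$. Step 1 and everything else are routine manipulations with valid pairs and contractions.
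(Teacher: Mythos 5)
Your proof is correct, but it takes a genuinely different route from the paper's. The paper attacks $v=w$ head-on by a first-discrepancy argument: assuming $v$ and $w$ agree up to position $\ell-1$ and differ at $\ell$, it evaluates $\ob{w}$ and $\lambda(v,\fI)$ at the two letters involved and derives the contradiction $x>y$ and $y>x$; the identity $\fI=\maxinv(v)$ is then explicitly left to the reader. You instead decouple the two conclusions and reverse their order: your Step~2 first pins down $\fI=\maxinv(v)$ using only the fact that the label lies in $\CFA$ (playing centrality against \eqref{defp2} to manufacture, past the right end of a maximal inversion $[O,C]$, a letter smaller than $v_C$, contradicting maximality), your Step~1 then computes $\lambda(v,\maxinv(v))=\ob{v}$ outright, and the remaining equality $\ob{v}=\ob{w}$ is settled by a direct induction showing $w\mapsto\ob{w}$ is injective on $\WA$ (peel off the largest letter, whose position is $\ob{w}(a_m)$). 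All three steps check out, and there is no circularity: the paper derives injectivity of $\ct$ from this lemma by counting (Corollary~\ref{coro}), whereas you prove it independently. What your route buys is that it actually supplies the omitted half of the paper's proof, isolates the reusable facts that $(v,\maxinv(v))$ is a valid pair with $\lambda(v,\maxinv(v))=\ob{v}$ and that \emph{any} valid pair with a central label must carry $\fI=\maxinv(v)$, and replaces the paper's somewhat delicate bookkeeping of the positions $a=o_P(y)$ and $b=o_P(x)$ by a single clean inequality; the paper's argument, by contrast, is more local and needs no preliminary structural lemmas about $\maxinv$.
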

\begin{proof}
  We first prove that $v=w$. Let $A=\{a_1,\dotsc,a_m\}$ with
  $a_1<\dotsb<a_m$, and suppose that, for $\pi,\rho\in\fS_m$,
  $v=a_{\pi_1}a_{\pi_2}\dotsb a_{\pi_m}$ and
  $w=a_{\rho_1}a_{\rho_2}\dotsb a_{\rho_m}$, and that, for some
  $1\leq\ell\leq n$, $\pi_i=\rho_i$ whenever $1\leq i<\ell$ but,
  contrary to our assumption, $\pi_\ell\neq\rho_\ell$. Finally, define
  $j,k>\ell$ such that $\rho_\ell=\pi_j$ and $\pi_\ell=\rho_k$ and
  $x:=a_{\pi_\ell}$, $y:=a_{\rho_\ell}$. Graphically, we have
\begin{align*}
  v&\;=\;w_1\;\dotsb\;w_{\ell-1}\;x{=}v_\ell\;v_
  {\ell+1}\,\dotsb\;y{=}v_j\;
  \dotsb\;v_m\\
  w&\;=\;w_1\;\dotsb\;w_{\ell-1}\;y{=}w_\ell\;w_
  {\ell+1}\;\dotsb\;x{=}w_k\;\dotsb\;w_m
\end{align*}
Then, for $a=o_P(y)<j$,
\begin{align*}
  &\ob{w}(y)=\ell-\Big|\big\{ 1\leq i<\ell\mid w_i>y\big\}\Big|\\
  &\hphantom{\ob{w}(y)}= j-\Big|\big\{ a\leq i<j\mid v_i>y\big\}\Big|
  \intertext{and hence} &j-\ell=\Big|\big\{\ell\leq i<j\mid
  v_i>y\big\}\Big|\boldsymbol{{}-{}}\Big|\big\{ 1\leq i<a\mid
  w_i>y\big\}\Big|\,.
  \intertext{This means that, for every $i$ with $\ell\leq i<j$,
    $w_i>y$ (and, in particular, $x>y$) and that, for every $i$ with
    $1\leq i<a$, $w_i\leq y$.  On the other hand, for
    $b=o_P(x)\leq\ell$,}
  &\ob{w}(x)=k-\Big|\big\{ 1\leq i<k\mid w_i>x\big\}\Big|\\
  &\hphantom{\ob{w}(x)}=\ell-\Big|\big\{ b\leq i<\ell\mid
  w_i>x\big\}\Big| \intertext{and} &k-\ell=\Big|\big\{\ell\leq i<k\mid
  w_i>x\big\}\Big|\boldsymbol{{}+{}}\Big|\big\{ 1\leq i<b\mid
  w_i>x\big\}\Big|
\end{align*}
Note that $b\leq a$ since $\ell<j$ and $P$ is a valid pair.  Then,
$\big\{ 1\leq i<b\mid w_i>x\big\}=\varnothing$ and $w_i>x$ for every
$i$ with $\ell\leq i<j$. In particular, $y>x$, which is absurd.  We
now leave it to the reader to prove that $\fI=\maxinv(v)$.
\end{proof}

\begin{corollary}\label{coro}
  Let $A\subseteq[n]$. The function $\ct\colon \WA\to\CFA\colon
  w\mapsto\ob{w}$ is a bijection.
\end{corollary}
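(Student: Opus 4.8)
The plan is to combine Lemma~\ref{lema1} with a cardinality count. Put $m:=|A|$. Then $|\WA|=m!$, and likewise $|\CFA|=m!$, because an $A$-central parking function is obtained by choosing, independently and for each $j\in[m]$, a value in $[j]$ for $a_j$. Since $\ct$ is already known to take values in $\CFA$ (recorded in Definition~\ref{contract}), it suffices to prove that $\ct$ is injective: an injection between finite sets of equal cardinality is automatically a bijection.

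To prove injectivity I would reduce to Lemma~\ref{lema1}, which says that $\ob w$ determines uniquely the word and the interval family of any valid pair whose Pak-Stanley label is $\ob w$. Hence it is enough to exhibit, for each $v\in\WA$, \emph{one} valid pair with label $\ob v$: then an equality $\ob v=\ob w$ forces at once $v=w$. The natural choice is $(v,\maxinv(v))$, so two points must be checked. First, $(v,\maxinv(v))$ is a valid pair: inside the poset $\inv(v)$, two inversions sharing a left (resp.\ right) endpoint are comparable, so distinct maximal inversions have distinct left and distinct right endpoints; and if $(i,j)$ and $(i',j')$ are maximal with $i<i'$ then $j<j'$, since $j\ge j'$ would give $[i',j']\subseteq[i,j]$. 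Thus listing the maximal inversions by increasing left endpoint makes the right endpoints increase too, and $v_{o_\ell}>v_{c_\ell}$ holds because $(o_\ell,c_\ell)$ is an inversion; these are exactly the defining conditions of a valid pair. Second, $\lambda(v,\maxinv(v))=\ob v$: fixing $i\in A$ and $j:=v^{-1}(i)$, if $j$ lies in no interval of $\maxinv(v)$ then (since every inversion of $v$ sits inside a maximal one) $j$ lies in no inversion interval of $v$ whatsoever, so no letter before position $j$ exceeds $i$, and both sides equal $j$ by \eqref{defp1}; otherwise, with $\ell$ least such that $j\in[o_\ell,c_\ell]$, formula \eqref{defp2} together with the identity $\ob v(a)=v^{-1}(a)-\big|\{b\in A\mid b>a,\ v^{-1}(b)<v^{-1}(a)\}\big|$ applied to the subword $v_{o_\ell}\cdots v_{c_\ell}$ reduces $\lambda(v,\maxinv(v))(i)$ to $j-\big|\{b\in v([o_\ell,c_\ell])\mid b>i,\ o_\ell\le v^{-1}(b)<j\}\big|$, while $\ob v(i)=j-\big|\{b\in A\mid b>i,\ v^{-1}(b)<j\}\big|$.

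The only substantial point — everything else is bookkeeping — is that these last two sets coincide, i.e.\ that every $b>i$ occurring before position $j$ already occurs inside the block $[o_\ell,c_\ell]$. This is where maximality and the minimality of $\ell$ are used: such a $b$, paired with position $j$, is an inversion of $v$, hence is contained in some maximal inversion $[o,c]$ with $j\in[o,c]$, and minimality of $\ell$ forces $o_\ell\le o$, so $o_\ell\le o\le v^{-1}(b)<j\le c_\ell$, whence $b\in v([o_\ell,c_\ell])$ with $o_\ell\le v^{-1}(b)$. Granting this, $\lambda(v,\maxinv(v))=\ob v$ follows; then $\ob v=\ob w$ gives $\lambda(v,\maxinv(v))=\ob w$, and Lemma~\ref{lema1} yields $v=w$. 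This proves injectivity, hence bijectivity, of $\ct$.
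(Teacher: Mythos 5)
Your proof is correct and takes essentially the same route as the paper: both count $|\WA|=|\CFA|=|A|!$ and deduce bijectivity from injectivity of $\ct$, which is obtained from Lemma~\ref{lema1}. You merely make explicit the step the paper leaves to the reader --- that $(v,\maxinv(v))$ is a valid pair with $\lambda(v,\maxinv(v))=\ob{v}$ --- and your verification of that step is sound.
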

\begin{proof}
  Since $\big|\WA\big|=\big|\CFA\big|=|A|!$, the result follows from
  the last lemma, since $\ct$ is injective.
\end{proof}

\begin{definition}$\ $
\begin{itemize}
\item We denote the inverse of $\ct$ by $\phiA\colon\CFA\to\WA$.
\item Given an $A$-parking function $f\colon A\to[n]$, the center of
  $f$, $Z(f)$, is the (unique\footnote{Note that if the restriction of
    $f$ to $X$ is $X$-central and the restriction of $f$ to $Y$ is
    $Y$-central for two subsets $X$ and $Y$ of $A$, then the
    restriction of $f$ to $(X\cup Y)$ is also $(X\cup Y)$-central.})
  maximal subset $Z$ of $A$ such that the restriction of $f$ to $Z$ is
  $Z$-central. Let $\zeta:=|Z|$ and note that $\zeta\neq0$ since
  $f^{-1}(1)\subseteq Z$ and $\big|f^{-1}(1)\big|\geq 1$.  Finally,
  let $f_Z\colon Z\to[n]$ be the restriction of $f$ to its center.
\end{itemize}
\end{definition}

\begin{lemma}\label{lema2}
  Let $f=\lambda(w,\fI)$ for a valid pair $P=(w,\fI)$, where $w\in\WA$
  for $A\subseteq[n]$ with $m=|A|$.
  \begin{enumerate}
  \item[\addtocounter{enumi}{1}
    \textbf{\arabic{section}.\arabic{theorem}.\arabic{enumi}.}]  Let,
    for some $p\geq 0$, $\fI=\big\{[o_1,c_1],\dotsc,[o_p,c_p]\big\}$
    with $o_1<\dotsb<o_p$.  Then,
$$f_Z=\ob{\w{1}{\zeta}}$$
and, in particular, $w([\zeta])=Z$.  Moreover, $\maxinv(\w{1}{\zeta})=
\{[o_1,c_1],\dotsc,[o_j,c_j]\}$ for some $0\leq j\leq p$.\\

\item[\addtocounter{enumi}{1}
  \textbf{\arabic{section}.\arabic{theorem}.\arabic{enumi}.}]  For
  every $j\in[m]$, $w_j\in Z(f)$ if and only if
$$f(w_j)=1+\Big|\big\{k<j\mid w_k<w_j\big\}\Big|\,.$$
\end{enumerate}
\end{lemma}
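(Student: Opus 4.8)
\smallskip
\noindent\emph{Plan of proof.}\ Abbreviate $o_j:=o_P(w_j)$. Rewriting \eqref{pdef}, and using the monotonicity of the $o_\ell$ to recognise the indices $k<j$ lying in no common interval with $j$ as exactly those with $k<o_j$, one obtains
\[
f(w_j)\;=\;1+\bigl|\{k<j:w_k<w_j\}\bigr|\;+\;\bigl|\{k<o_j:w_k>w_j\}\bigr|\qquad(j\in[m]).
\]
Hence the equality in part~2.5.3 holds at $j$ if and only if $\{k<o_j:w_k>w_j\}=\varnothing$, i.e.\ if and only if $(\star_j)$: $w_k<w_j$ for all $k<o_j$; and, by the same computation, for any $r$ one has $f(w_i)=|\{k\le i:w_k\le w_i\}|=\ob{\w{1}{r}}(w_i)$ for every $i\le r$ precisely when $(\star_1),\dots,(\star_r)$ all hold. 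Let $\zeta'$ be the largest integer for which $(\star_1),\dots,(\star_{\zeta'})$ hold; since $o_1=1$ we have $\zeta'\ge1$. The plan is to prove that $Z(f)=w([\zeta'])$, which yields part~2.5.2 at once: the characterization just given forces $f|_{w([\zeta'])}=\ob{\w{1}{\zeta'}}$, so $\zeta=|Z|=\zeta'$, $f_Z=\ob{\w{1}{\zeta}}$ and $w([\zeta])=Z$.

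The inclusion $w([\zeta'])\subseteq Z(f)$ is immediate: $f|_{w([\zeta'])}=\ob{\w{1}{\zeta'}}$ is $w([\zeta'])$-central by the remark after Definition~\ref{contract}, and $Z(f)$ contains every subset on which $f$ is central (the central subsets being closed under union, cf.\ the footnote defining the center). For the reverse inclusion I would argue by contradiction: suppose $Z(f)\not\subseteq w([\zeta'])$ and let $a$ be the smallest element of $Z(f)\setminus w([\zeta'])$, occurring at position $i>\zeta'$, so $a=w_i$. Since $w([\zeta'])\subseteq Z(f)$ and $a$ is minimal in the complement, the rank of $a$ in $Z(f)$ equals $|\{k\le\zeta':w_k\le a\}|+1$, while the displayed formula gives $f(a)\ge 1+|\{k<i:w_k<a\}|\ge 1+|\{k\le\zeta':w_k<a\}|=|\{k\le\zeta':w_k\le a\}|+1$. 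As $f|_{Z(f)}$ is $Z(f)$-central, $f(a)$ is at most that rank, so both inequalities are equalities; this forces $(\star_i)$ to hold and forces $w_k>a$ for every $k$ with $\zeta'<k<i$. Then $w_{\zeta'+1}\ge a$, and for every $k<o_{\zeta'+1}\le o_i$ we get $w_k<a\le w_{\zeta'+1}$, i.e.\ $(\star_{\zeta'+1})$ holds --- contradicting the maximality of $\zeta'$. So $Z(f)=w([\zeta'])$, and part~2.5.3 follows as well: $w_j\in Z(f)$ iff $j\le\zeta$, which by the first paragraph means exactly that $f(w_j)=1+|\{k<j:w_k<w_j\}|$ (together with the analogous equalities at the earlier positions, which the argument just given shows are automatic once $w_j\in Z(f)$).

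It remains to compute $\maxinv(\w{1}{\zeta})$, and this I expect to be the main obstacle. Let $j$ be the largest index with $c_j\le\zeta$ and put $\mathfrak{J}:=\{[o_1,c_1],\dots,[o_j,c_j]\}$, the intervals of $\fI$ contained in $[1,\zeta]$; then $(\w{1}{\zeta},\mathfrak{J})$ is a valid pair. The plan is to show $\lambda(\w{1}{\zeta},\mathfrak{J})=\ob{\w{1}{\zeta}}$, whereupon Lemma~\ref{lema1} gives $\mathfrak{J}=\maxinv(\w{1}{\zeta})$, which is the assertion. For the equality one compares, position by position for $i\le\zeta$, the value $\lambda(\w{1}{\zeta},\mathfrak{J})(w_i)$ with $\ob{\w{1}{\zeta}}(w_i)=f(w_i)$: if the least interval of $\fI$ meeting $i$ is contained in $[1,\zeta]$ it lies in $\mathfrak{J}$ and the two pairs compute the same value; otherwise $i$ lies in no interval of $\mathfrak{J}$, so $\lambda(\w{1}{\zeta},\mathfrak{J})(w_i)=i$, and one checks, using $(\star_i)$ and the interval structure, that in this case $w_i=\max(w_1,\dots,w_i)$, whence $f(w_i)=\ob{\w{1}{\zeta}}(w_i)=i$ too. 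The delicate point is exactly this last comparison; the rest is the bookkeeping carried out above.
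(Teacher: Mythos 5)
Your argument for the central claims --- $Z(f)=w([\zeta])$ and $f_Z=\ob{\w{1}{\zeta}}$, together with the ``only if'' direction of the second part --- is correct and complete, and it takes a genuinely different route from the paper. The paper proves $w([\zeta])=Z$ by comparing the restriction of $w$ to $Z$ with the word $\phiZ(f_Z)$ and invoking Lemma~\ref{lema1}; your squeeze between the lower bound $f(a)\ge 1+\big|\{k<i: w_k<a\}\big|$ coming from the rewritten \eqref{pdef} and the centrality upper bound $f(a)\le\big|Z(f)\cap[a]\big|$, combined with the monotonicity $o_P(w_k)\le o_P(w_\ell)$ already recorded before \eqref{comp}, is self-contained, avoids Lemma~\ref{lema1} for this part entirely, and is considerably more detailed than the paper's own treatment (which settles the whole lemma in a few lines and dismisses the second part as ``a clear consequence'').

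The two points you flag as delicate, however, are not merely delicate: as stated they are false, so no argument can close them. For the $\maxinv$ claim take $w=2431$ and $\fI=\{[2,4]\}$, a valid pair since $w_2=4>1=w_4$. Then $\ob{\w{2}{4}}=\ob{431}$ sends each of $4,3,1$ to $1$, so $f=\lambda(w,\fI)$ is given by $f(2)=1$ and $f(1)=f(3)=f(4)=2$; hence $Z(f)=\{2,3,4\}$, $\zeta=3$, $\w{1}{3}=243$, and $\maxinv(243)=\{[2,3]\}$, which is neither $\varnothing$ ($j=0$) nor $\{[2,4]\}$ ($j=1$). This is precisely your case of an interval straddling $\zeta$: the inversion $(2,3)$ of $\w{1}{\zeta}$ lies in the truncation of $[2,4]$, and $w_3=3\neq\max(w_1,w_2,w_3)$, so the comparison you call ``the delicate point'' genuinely fails. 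A correct statement must retain the truncated straddling interval, which is what the paper's own recursion silently does when it sets $\tI_1=[1,c_{j+1}-c+1]$ instead of $[o_{j+1}-c+1,c_{j+1}-c+1]$. Likewise the ``if'' direction of the second part fails for a single position: for $(w,\fI)=(213,\varnothing)$ one gets $f=(2,1,3)$ and $Z(f)=\{2\}$, yet $f(w_3)=3=1+\big|\{k<3: w_k<3\}\big|$; your own condition $(\star_3)$ holds there while $(\star_2)$ fails, so your hedged parenthetical --- requiring the analogous equalities at all earlier positions --- is exactly the repair needed, and is the form in which the paper actually uses the statement later. In short: what you prove is proved correctly and more rigorously than in the paper; what you leave open cannot be closed without amending the statement, and your framework already exposes why.
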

\begin{proof}$ $\\
  (\ref{lema2}.1) We start by proving the second statement, namely
  that $w([\zeta])=Z$.  Note that $w_1\in f^{-1}(\{1\})\subseteq Z$
  and suppose, contrary to our claim, that, for some $k<\zeta$ which
  we consider as small as possible, $w_k\notin Z$. Again, let $\ell>k$
  be as small as possible with $w_\ell\in Z$ and define $v=\w{1}{k}$.

  We now consider the ``restriction'' $w^*$ of $w$ to $Z$, that is,
  the subword of $w$ obtained by deleting all the elements of
  $[n]\setminus Z$, and let
  $$w':=\phiZ(f_Z)\in\WZ\,.$$
  By Lemma~\ref{lema1}, $w^*=w'$ and $k-f(w_\ell)$ is the number of
  integers greater than $w_\ell$ that precede it in $w^*$.  This means
  that $w_k,\dotsc,w_{\ell-1}>w_\ell$ and that $o(w_\ell)\leq k$.
  Hence, $k-f(w_k)$ is also the number of integers greater than $w_k$
  that precede it in $w$, and so $\ob{v}$ is the restriction of $f$ to
  $w([k])$, and $a\in Z$, a contradiction.
  Now, the result follows also from Lemma~\ref{lema1}.\\
  \ref{lema2}.2 is a clear consequence of \ref{lema2}.1. \end{proof}

We have proven that the ``initial parts'' of both $w$ and $\fI$ are
characterized by $f$. Let $m=|A|$, consider $c\in\N$ such that
$1<c\leq\zeta$, and define $\tw:=\w{c}{m}$; define also
$\tfI:=\varnothing$ if $j=p$, for $j,p$ defined as in the
statement of Lemma~\ref{lema2}, and
$\tfI:=\{\tI_1,\dotsc,\tI_{p-j}\}$, where
$$\tI_1=:[1,c_{j+1}-c+1],\dotsc, \tI_{p-j}:=[o_p-c+1,c_p-c+1]\,,$$
if $p>j$.  Suppose that, for some such $c$, $f$ also determines
$\tf:=\lambda(\tw,\tfI)$.  This proves our promised result (by
induction on $|A|$) and shows how to proceed for actually finding
$w\in\fS_n$ and $\fI$, given $f=\lambda(w,\fI)$: we find the center
$Z$ of $f$, build $\phiZ(f_Z)\in\WZ$ and $\tf$, find the center
$\tilde{Z}$ of $\tf$, build $\phitZ(f_{\tilde{Z}})\in\WtZ$ and
$\tilde{\tf}$, etc.

\newcounter{aaa}
\begin{definition}\label{defn}
  Given a parking function $f\in\PFA$, $f=\lambda(w,\fI)$, $m:=|A|$,
  $Z:=Z(f)$, and $\zeta:=|Z|<m$,
\begin{itemize}
\item let $b:=\min f(A\setminus Z)$ and
  $a:={\bf\max}\big(f^{-1}(\{b\})\setminus Z\big)$;
\item
  if $b>\zeta$, let $c:=b$;\\
  if $b\leq\zeta$, let $c$ be the greatest integer $i\in[\zeta]$ for
  which\\[-5pt]
\addtocounter{equation}{1}\addtocounter{aaa}{\arabic{equation}}
\item[]\strut\rlap{(\arabic{section}.\theaaa)}
\hfill$i+\big|w([i,\zeta])\cap [a-1]\big| = b\,.$\hfill\strut\\[-5pt]
\item let $X:= w([c-1])$ ($X\subseteq Z$ by
  Lemma~\ref{lema2});\\[-.5\baselineskip]
\item let$\strut$\\[-1.1\baselineskip]
  \hphantom{let$\strut$}$\begin{array}{rccl}
    \tf\colon&A\setminus X&\to&[m-c+1]\\[2pt]
    &x&\mapsto&
    \begin{cases}
      f(x)-\big|X\cap[x-1]\big|\,,& \text{if $x\in Z$\,;}\\
      f(x)-c+1\,,&\text{otherwise\,.}
\end{cases}
\end{array}$
\end{itemize}
\end{definition}

\begin{lemma}\label{lema3}
  With the definitions above,
\begin{enumerate}
\item[\addtocounter{enumi}{1}
  \textbf{\arabic{section}.\arabic{theorem}.\arabic{enumi}.}]
  $a=w_{\zeta+1}$ and $\strut a\in Z(\tf)$;
\item[\addtocounter{enumi}{1}
  \textbf{\arabic{section}.\arabic{theorem}.\arabic{enumi}.}]  $\strut
  Z\setminus X\subseteq Z(\tf)$;
\item[\addtocounter{enumi}{1}
  \textbf{\arabic{section}.\arabic{theorem}.\arabic{enumi}.}]  $\strut
  c=o_{(\tw,\tfI)}(a)$ and
\item[\addtocounter{enumi}{1}
  \textbf{\arabic{section}.\arabic{theorem}.\arabic{enumi}.}]  $\strut
  \tf=\lambda\big(\tw,\tfI\big)$.
\end{enumerate}
\end{lemma}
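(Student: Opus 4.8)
The plan is to prove the four assertions essentially in the stated order, since each one feeds the next; the whole argument is a careful bookkeeping exercise comparing the defining formula \eqref{pdef}–\eqref{defp2} for $f=\lambda(w,\fI)$ with the corresponding formula for $\tf=\lambda(\tw,\tfI)$, where $\tw=\w{c}{m}$ is obtained by deleting the first $c-1$ letters and $\tfI$ is the obvious reindexing of the tail of $\fI$. First I would record the basic structural facts already available: by Lemma~\ref{lema2}.1 we have $w([\zeta])=Z$ and $f_Z=\ob{\w{1}{\zeta}}$, so $X=w([c-1])\subseteq Z$, and the letters in positions $\zeta+1,\dots,m$ of $w$ are exactly the elements of $A\setminus Z$. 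The key observation for (\ref{lema3}.1) is that $a$ was defined as the \emph{largest} element of $f^{-1}(b)\setminus Z$, and $b=\min f(A\setminus Z)$; I would show that among the letters $w_{\zeta+1},\dots,w_m$ the first one, $w_{\zeta+1}$, is forced to have $f$-value equal to $b$ and to be the maximal such, using the monotonicity statement \eqref{comp} (if $w_k>w_\ell$ with $k<\ell$ then $f(w_k)\le f(w_\ell)$) applied inside the relevant intervals of $\fI$: a later element of $A\setminus Z$ with the same small $f$-value $b$ would have to be larger, contradicting maximality, hence $a=w_{\zeta+1}$. That $a\in Z(\tf)$ then follows from Lemma~\ref{lema2}.2 applied to $\tf$: one checks that $\tf(a)=1+|\{k<1\mid \tw_k<a\}|=1$, i.e.\ that $a$ receives value $1$ in $\tf$, which is exactly what the subtraction by $c-1$ (resp.\ the formula defining $c$ via (\ref{defn}.1)) was engineered to achieve.

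For (\ref{lema3}.2), the point is that deleting the prefix $X\subseteq Z$ and renormalising cannot destroy $Z$-centrality of the remaining part of the old center. Concretely, for $x\in Z\setminus X$ I would verify $\tf(x)=f(x)-|X\cap[x-1]|$ equals $1+|\{k<\tw^{-1}(x)\mid \tw_k<x\}|$ by comparing with the analogous identity for $f$ coming from Lemma~\ref{lema2}.2 (valid because $x\in Z$), noting that the positions and letters smaller than $x$ that precede $x$ in $\tw$ are precisely those in $w$ minus the ones that lay in $X\cap[x-1]$; then Lemma~\ref{lema2}.2 for $\tf$ gives $x\in Z(\tf)$. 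Assertion (\ref{lema3}.3), $c=o_{(\tw,\tfI)}(a)$, is a direct unwinding of the definition of $o_P$: the position of $a=w_{\zeta+1}$ inside $\tw$ is $\zeta+1-(c-1)=\zeta-c+2$, and one checks from the reindexing $\tI_1=[1,c_{j+1}-c+1],\dots$ whether this position lies in some $\tI_\ell$; if $b>\zeta$ then $a\notin Z$ lies outside all old intervals beyond index $j$ in the relevant sense and \eqref{defp1} gives $o=$ that position, while a short computation shows this equals $c$; if $b\le\zeta$ the defining equation (\ref{defn}.1), $c+|w([c,\zeta])\cap[a-1]|=b$, is exactly the statement that $\ob{\w{c}{c_\ell}}(a)=b-c+1$ for the appropriate interval, which by \eqref{defp2} pins down $o_{(\tw,\tfI)}(a)=c$.

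Finally, (\ref{lema3}.4) is proved by showing that $\lambda(\tw,\tfI)$ and the function $\tf$ from Definition~\ref{defn} agree at every $x\in A\setminus X$, arguing separately for $x\in Z\setminus X$ and for $x\notin Z$. For $x\in Z\setminus X$ the equality $\lambda(\tw,\tfI)(x)=f(x)-|X\cap[x-1]|$ follows because both the prefix count in \eqref{pdef} and the interval data restrict transparently once $X$ is removed (here one uses (\ref{lema3}.2) and Lemma~\ref{lema2}.1 to know $x$ stays central, so only the "$w_k<x$" term matters and it drops by exactly $|X\cap[x-1]|$). For $x\notin Z$, i.e.\ $x$ a letter of $w$ in some position $>\zeta$, one uses (\ref{lema3}.3) and the fact that $o_P(x)\ge o_P(a)$, combined with \eqref{defp2}: the interval of $\fI$ governing $x$ and the reindexed interval of $\tfI$ governing $x$ in $\tw$ contract to values differing by exactly $c-1$, because the contraction $\ob{\cdot}$ only sees letters and positions strictly after $o_P(x)\ge c$, which are untouched by deleting positions $1,\dots,c-1$. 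The main obstacle I expect is the bookkeeping in (\ref{lema3}.3)–(\ref{lema3}.4) in the case $b\le\zeta$: there the cut point $c$ is defined implicitly by (\ref{defn}.1) rather than simply as $b$, and one must check that $c\le\zeta$, that $\tfI$ as reindexed is genuinely a valid pair with $\tw$, and that $a$'s position in $\tw$ falls in the first reindexed interval $\tI_1$ (so that \eqref{defp2} rather than \eqref{defp1} applies to it) — this is where the precise choice of $a$ as the \emph{maximum} of $f^{-1}(b)\setminus Z$ and the greatest-$i$ clause in the definition of $c$ are both used, and getting those inequalities exactly right is the delicate part.
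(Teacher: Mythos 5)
Your overall strategy is the paper's: compare the formula \eqref{pdef} for $f=\lambda(w,\fI)$ with the corresponding formula for $\lambda(\tw,\tfI)$, identify $a$ with $w_{\zeta+1}$ via \eqref{comp} and the maximality of $a$, and control the centers with Lemma~\ref{lema2}. But as written the plan has a circularity. In your arguments for (\ref{lema3}.1) and (\ref{lema3}.2) you invoke ``Lemma~\ref{lema2}.2 applied to $\tf$''; that lemma, however, only applies to functions already known to be of the form $\lambda(\text{valid pair})$, and for $\tf$ this is precisely assertion (\ref{lema3}.4) --- which you prove last, and whose proof in your plan uses (\ref{lema3}.2). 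The paper breaks the circle by setting $g:=\lambda(\tw,\tfI)$ at the outset: it first checks $g(x)=f(x)-c+1=\tf(x)$ for every $x\in A\setminus Z$ by a direct computation from \eqref{pdef} that needs no centrality, then applies Lemmas~\ref{lema2}.2 and \ref{lema2}.1 to $g$ (legitimately, since $g$ is a $\lambda$-image) to get $a\in Z(g)$, $Z\setminus X\subseteq Z(g)$ and $c=o_{(\tw,\tfI)}(a)$, and only then concludes $g=\tf$ on the remaining letters $w_j$, $c\leq j\leq\zeta$. You need this reordering, or an equivalent device, for the argument to close. Relatedly, your verification of $a\in Z(\tf)$ contains a concrete slip: $a=w_{\zeta+1}$ sits at position $\zeta-c+2$ of $\tw=\w{c}{m}$, not at position $1$ (position $1$ of $\tw$ is $w_c\in Z$), so $\tf(a)=b-c+1=1+\big|\tw([\zeta-c+1])\cap[a-1]\big|$, which equals $1$ only when $b=c$; the defining equation for $c$ is engineered to make this quantity match the criterion of Lemma~\ref{lema2}.2, not to make $\tf(a)=1$.

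Second, in the case $b\leq\zeta$ you flag, but do not prove, that the cut point $c$ exists. This is the one substantive idea of the paper's proof that your plan omits: putting $h(i)=i+\big|w([i,\zeta])\cap[a-1]\big|$, one has $h(i+1)-h(i)\in\{0,1\}$ and $h(\zeta)\geq\zeta\geq b$, so it suffices to show $h(1)<b$, i.e.\ $1+\big|Z\cap[a-1]\big|<f(a)$; if this failed, Lemma~\ref{lema2}.2 would make the restriction of $f$ to $Z\cup\{a\}$ central, contradicting the maximality of the center $Z$. Without this use of maximality, $c$ --- and with it everything in (\ref{lema3}.3) and (\ref{lema3}.4) --- is not even well defined.
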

\begin{proof}
  If $b>\zeta$, then $X=Z$ and all the statements follow directly from
  the definitions. Hence, we consider that $b\leq\zeta$.  We start by
  seeing that $c$ is well defined.  Define $h\colon[\zeta]\to\N$ by
$$h(i)=i+\big|w([i,\zeta])\cap [a-1]\big|\,.$$
Then, for every $i<\zeta$, since $w([i,\zeta])=\{w_i\}\cup
w([i+1,\zeta])$, $h(i+1)$ either equals $h_i$ or $h_i+1$, depending on
whether $w_i$ is either less than $a$ or greater than $a$.  Since
$h(\zeta)\geq\zeta\geq b$, by definition, all we have to prove is that
$h(1)<b$, or, equivalently, that $1+\big|Z\cap[a-1]\big|<f_a$. But
$f_a\leq 1+\big|Z\cap[a-1]\big|$ implies that the restriction of $f$
to $Z':=Z\cup\{a\}$ is $Z'$-central, by Lemma~\ref{lema2}.2, which,
since $a\notin Z$, contradicts the maximality of $Z$. Note that the
set of values of $i$ for which (\arabic{section}.\theaaa) holds true
is an interval, and that its maximum, $c$, is the only one that is
greater than $a$.  By definition of $a$ and by Lemma~\ref{lema2}.1,
$a=w_{\zeta+1}$, for if $x=w_k$ and $a=w_\ell$ with $\ell>k$ and
$x>a$, then $f(x)\leq b$, by \eqref{comp}, and $x\in Z(f)$ .

Now, let $g=\lambda\big(\tw,\tfI\big)$ for $\tw$ and $\tfI$ as defined
before.  If $x\in A\setminus Z$, by definition of $\lambda$, viz.
\eqref{pdef}, $g(x)=f(x)-c+1=\tf(x)$.
In particular, $g(a)=1+\big|\tw([\zeta-c+1])\cap[a-1]\big|$.  Hence,
by Lemma~\ref{lema2}.2, $a\in Z(g)$. Now, Lemma~\ref{lema2}.1 implies
that $Z\setminus X$, the set of elements on the left side of $a$ in
$\tw$, is a subset of $Z(g)$, and that $c=o_{(\tw,\tfI)}(a)$. Now, the
last result, viz.  $g=\tf$, follows immediately, since for $x=w_j$
with $c\leq j\leq\zeta$, $f(x)=1+\big|w([j])\cap[x-1]\big|$ and
$g(x)=1+\big|\tw([j-c+1])\cap[x-1]\big|$.
\end{proof}
This concludes the proof of our main result.
\begin{proposition}
  The Pak-Stanley labeling is injective.\qed
\end{proposition}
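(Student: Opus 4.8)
The plan is to prove, by induction on $m=|A|$, the following strengthening of the statement: \emph{whenever $(w,\fI)$ is a valid pair with $w\in\WA$, the $A$-parking function $f:=\lambda(w,\fI)$ determines both $w$ and $\fI$.} Since the regions of $\cS_n$ are in bijection with the valid pairs $(w,\fI)$ having $w\in\fS_n$, the case $A=[n]$ of this assertion is precisely the injectivity of the Pak--Stanley labeling.

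For the base case it is enough to treat the situation $Z(f)=A$ (which in particular includes $m=1$): then $f$ is $A$-central, i.e.\ $f\in\CFA$, so $f=\ob{\phiA(f)}$ by Corollary~\ref{coro}, and Lemma~\ref{lema1} applied to the valid pair $(w,\fI)$ forces $w=\phiA(f)$ and $\fI=\maxinv(w)$ --- both functions of $f$ alone.

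For the inductive step, suppose $\zeta:=|Z(f)|<m$. The center $Z=Z(f)$ and its restriction $f_Z$ are determined by $f$. By Lemma~\ref{lema2}, $w([\zeta])=Z$, the initial factor $\w{1}{\zeta}$ equals $\phiZ(f_Z)$, and the intervals of $\fI$ contained in $[1,\zeta]$ are exactly $\maxinv(\w{1}{\zeta})=\{[o_1,c_1],\dotsc,[o_j,c_j]\}$; thus all of this is read off from $f$. Next, the integers $b$, $a$, $c$, the set $X=w([c-1])\subseteq Z$, and the function $\tf$ of Definition~\ref{defn} are likewise built from $f$ only, and $c\ge 2$ (as seen in the proof of Lemma~\ref{lema3}), so $|A\setminus X|=m-(c-1)<m$. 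By Lemma~\ref{lema3}, $\tf=\lambda(\tw,\tfI)$ is a valid pair on the smaller ground set $A\setminus X$, so the induction hypothesis applies and recovers $(\tw,\tfI)$ from $\tf$, hence from $f$. It remains to reassemble: $w$ is the concatenation of $\w{1}{c-1}$ --- the first $c-1$ letters of $\phiZ(f_Z)$, legitimate because $X=w([c-1])\subseteq Z$ --- with $\tw=\w{c}{m}$; and $\fI$ consists of $\maxinv(\w{1}{\zeta})$ together with the intervals obtained from those of $\tfI$ by undoing the shift by $c-1$ of Definition~\ref{defn}. Hence $(w,\fI)$ is a function of $f$, and the induction closes.

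The substantive content is entirely contained in Lemmas~\ref{lema1}--\ref{lema3}, which I use as stated; in the present argument the only point needing care is the bookkeeping of the reassembly step --- verifying that $\fI$ genuinely splits into the part detected by $\maxinv(\w{1}{\zeta})$ and the part that, after the shift by $c-1$, coincides with $\tfI$ --- together with the observation that $|A\setminus X|<|A|$, which is what makes the induction terminate. I expect this bookkeeping to be the main, and fairly mild, obstacle.
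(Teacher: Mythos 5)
Your proposal is correct and follows essentially the same route as the paper: the paper's own proof of the proposition is exactly this induction on $|A|$, with the central case handled by Lemma~\ref{lema1} and Corollary~\ref{coro}, the initial segment of $w$ and $\fI$ recovered via Lemma~\ref{lema2}, and the recursion closed by Lemma~\ref{lema3}. The reassembly bookkeeping you flag is likewise left implicit in the paper; note that Lemma~\ref{lema3}.3, i.e.\ $c=o_{(\tw,\tfI)}(a)$, is precisely what guarantees that shifting $\tI_1=[1,c_{j+1}-c+1]$ back by $c-1$ recovers the original interval $[o_{j+1},c_{j+1}]$.
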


\section{Inverse}
\noindent
It is easy to directly prove Corollary~\ref{coro} and even to
explicitly define $\phiA$, the inverse of $\ct$.  Nevertheless, we
consider here a method that we find very convenient, and particularly
well-suited to our purpose, the s-parking. Note that a similar method
is given by the depth-first search version of Dhar's burning algorithm
defined by Perkinson, Yang and Yu \cite{PYY}. In fact, it may be
proved that $Z(f)$ is the set of $\zeta$ visited vertices before the
first back-tracking, and that $\w{1}{\zeta}$ is given by the order in which
the vertices are visited.

\begin{definition}
  Let again $A=:\{a_1,\dotsc,a_m\}$ with $a_1<\dotsb<a_m$ and
  $f:A\to[m]$. For every $i\in[m]$, define the set
  $A_i:=\{a_1,\dotsc,a_i\}$, and define recursively the bijection
  $w^i\colon A_i \to[i]$ as follows.
 \begin{itemize}
 \item $w^1:a_1\mapsto 1$ (necessarily);
 \item for $1<j\leq i\leq m$,
   \begin{itemize}
   \item if $j<i$, $w^i(a_j)=\begin{cases}
       w^{i-1}(a_j), &\text{if $w^{i-1}(a_j)<f(a_i)$}\\
       1+w^{i-1}(a_j),&\text{if $w^{i-1}(a_j)\geq f(a_i)$}\end{cases}$
   \item $w^i(a_i)=f(a_i)$;
   \end{itemize}
 \end{itemize}
 Finally, let $\psi:[m]\to A$ be the inverse of $w^m:A\to[m]$.  We
 call $S(f):=\psi$ (viewed as the word {\footnotesize $\psi(1)\dotsb
   \psi(m)$}) the \emph{s-parking of $f$}.
 \end{definition}

 This operation resembles placing books on a bookshelf, where in step
 $i$ we want to put book $a_i$ at position $f(a_i)$ --- and so we must
 shift right every book already placed in a position greater than or
 equal to $f(a_i)$.  For example, if $A=\{3,4,6,7,8,9\}\subseteq[9]$
 and
 $f=\overset{\scriptscriptstyle3}{1}\overset{\scriptscriptstyle4}{1}
 \overset{\scriptscriptstyle6}{3}\overset{\scriptscriptstyle7}{4}
 \overset{\scriptscriptstyle8}{1}\overset{\scriptscriptstyle9}{4}$,
 then $S(f)=843967$. On the other hand, if $B=\{1,2,3,6,7,9\}$ and
 $g=\overset{\scriptscriptstyle1}{1}\overset{\scriptscriptstyle2}{2}
 \overset{\scriptscriptstyle3}{1}\overset{\scriptscriptstyle6}{2}
 \overset{\scriptscriptstyle7}{3}\overset{\scriptscriptstyle9}{2}$,
 then $S(g)=396712$. Finally, let $C=\{1,2,5,7\}$ and
 $h=\overset{\scriptscriptstyle1}{1}\overset{\scriptscriptstyle2}{2}
 \overset{\scriptscriptstyle5}{3}\overset{\scriptscriptstyle7}{1}$, so
 that $S(h)=7125$. The three constructions are used in the next
 example. See Figure~\ref{fig2}, where a parking function $f$ is
 represented on the top rows by orderly stacking in column $i$ the
 elements of $f^{-1}(i)$ (cf. \cite{Gar}), and row $j$ below the
 horizontal line is the inverse of $w^j$.  Note that \eqref{central}
 implies that $w^i$ is indeed a bijection for $i=1,\dotsc,m$.

 \begin{figure}[ht]
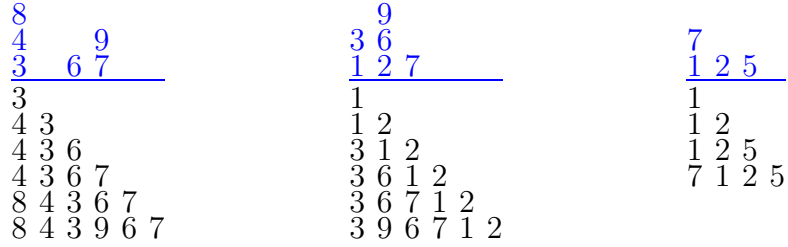

 {\blue\Large
 $$\begin{smallmatrix}
 8\\4&&&9\\3&&6&7\\[1.5pt]\hline\\[.5pt]
 {\black3}\\
 {\black4}&{\black3}\\
 {\black4}&{\black3}&{\black6}\\
 {\black4}&{\black3}&{\black6}&{\black7}\\
 {\black8}&{\black4}&{\black3}&{\black6}&{\black7}\\
 {\black8}&{\black4}&{\black3}&{\black9}&{\black6}&{\black7}
 \end{smallmatrix}\qquad\qquad
 \begin{smallmatrix}
 &9\\3&6\\1&2&7\\[1.5pt]\hline\\[.5pt]
 {\black1}\\
 {\black1}&{\black2}\\
 {\black3}&{\black1}&{\black2}\\
 {\black3}&{\black6}&{\black1}&{\black2}\\
 {\black3}&{\black6}&{\black7}&{\black1}&{\black2}\\
 {\black3}&{\black9}&{\black6}&{\black7}&{\black1}&{\black2}
 \end{smallmatrix}\qquad\qquad
\begin{smallmatrix} \vphantom{8}\\7\\1&2&5\\[1.5pt]\hline\\[.5pt]
 {\black1}\\
 {\black1}&{\black2}\\
 {\black1}&{\black2}&{\black5}\\
 {\black7}&{\black1}&{\black2}&{\black5}\\
 \vphantom{8}\\
 \vphantom{8}
 \end{smallmatrix}
$$
 }
 \caption{S-parking}
 \label{fig2}
 \end{figure}

\begin{lemma}
  Given $A$ and $f$ as in the previous definition,
  $f=\ob{S(f)}$. Conversely, given $A$ and $w\in\WA$,
  $w=S\big(\ob{w}\big)$.
\end{lemma}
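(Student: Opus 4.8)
The plan is to prove the ``Conversely'' identity $w=S(\ob w)$ for every $w\in\WA$ by a direct induction, and then to read off the first identity $f=\ob{S(f)}$ from it using Corollary~\ref{coro}. (For the first identity $f$ must be taken in $\CFA$, since $\ob{S(f)}$ is automatically $A$-central; this is also the generality in which the statement is used, as $S$ serves to compute $\phiA=\ct^{-1}$ on $\CFA$.)

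Fix $w=w_1\dotsb w_m\in\WA$, set $f:=\ob w$, and let $w^1,\dotsc,w^m$ be the bijections produced by the s-parking of $f$. The heart of the argument is the invariant, to be proved by induction on $i$, that \emph{the word associated with $w^i$, namely $(w^i)^{-1}(1)\dotsb(w^i)^{-1}(i)$, is exactly $w|_{A_i}$}, the subword of $w$ obtained by deleting every letter that does not lie in $A_i=\{a_1,\dotsc,a_i\}$. The base case $i=1$ is trivial. For the step, recall from the note after Definition~\ref{contract} that $f(a_i)=\ob w(a_i)=\big|w([w^{-1}(a_i)])\cap[a_i]\big|$; since the elements of $A$ that are $\le a_i$ are precisely $a_1,\dotsc,a_i$ and the $w^{-1}(a_i)$-th letter of $w$ is $a_i$ itself, this count equals $1+\big|\{\,j<i\mid a_j\ \text{precedes}\ a_i\ \text{in}\ w\,\}\big|$. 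Hence $f(a_i)-1$ is the number of letters of $A_{i-1}$ lying to the left of $a_i$ in $w$, and, those letters being an initial segment of $w|_{A_{i-1}}$, the word $w|_{A_i}$ is obtained from $w|_{A_{i-1}}$ precisely by inserting $a_i$ into position $f(a_i)$ and moving every later letter one step to the right --- which is exactly the rule that produces $w^i$ from $w^{i-1}$. By the induction hypothesis $w|_{A_{i-1}}$ is the word of $w^{i-1}$, so $w|_{A_i}$ is the word of $w^i$. Taking $i=m$ gives $w|_{A_m}=w$, that is, $S(f)=w$.

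For the first identity, Corollary~\ref{coro} says that $\ct\colon\WA\to\CFA$, $w\mapsto\ob w$, is a bijection, and we have just shown $S\circ\ct=\mathrm{id}_{\WA}$. Since a one-sided inverse of a bijection is its two-sided inverse, $S$ restricted to $\CFA=\ct(\WA)$ equals $\ct^{-1}$, whence $\ct\bigl(S(f)\bigr)=f$, i.e. $\ob{S(f)}=f$ for every $f\in\CFA$.

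The only delicate point is the bookkeeping in the inductive step: one must check carefully that ``insert $a_i$ at position $f(a_i)$, shifting the rest right'', applied to $w|_{A_{i-1}}$, really yields $w|_{A_i}$ --- equivalently, that the letters of $A_{i-1}$ appearing before $a_i$ in $w$ are exactly the first $f(a_i)-1$ letters of $w|_{A_{i-1}}$, are left fixed by the insertion, and that the remaining letters (those of $A_{i-1}$ after $a_i$ in $w$) all move right by one. Everything else is a routine unwinding of the definitions.
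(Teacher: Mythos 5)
Your proof is correct. It is the mirror image of the paper's argument: the paper establishes $f=\ob{S(f)}$ directly, by following a single element $a_i$ through the whole s-parking process --- it is first placed at position $f(a_i)$ and is bumped one step to the right exactly by those later-inserted (hence larger) elements that end up to its left, so that its final position minus the number of larger elements preceding it in $S(f)$ equals $f(a_i)$, which is precisely $f=\ob{S(f)}$ --- and then deduces $S=\phiA$, hence $w=S(\ob{w})$, from Corollary~\ref{coro}. You instead prove $w=S(\ob{w})$ directly, via the invariant that after step $i$ the word of $w^i$ is the restriction of $w$ to $A_i$, and recover $f=\ob{S(f)}$ from Corollary~\ref{coro}. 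The content is essentially the same (one argument tracks a single letter through all stages, the other tracks the whole word one stage at a time), and each version leans on Corollary~\ref{coro} for the remaining half; your inductive bookkeeping in the insertion step is carried out correctly. One small point where you are more careful than the paper: you observe that $f=\ob{S(f)}$ can only hold for $f\in\CFA$, since $\ob{S(f)}$ is always $A$-central, whereas the paper states the lemma for an arbitrary $f$ ``as in the previous definition'' and leaves that restriction implicit.
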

\begin{proof}
  Let $w=S(f)$ and $\psi=w^{-1}$ and note that, when we s-park $f$,
  each element $a_i$ of $A$ is put first at position $f(a_i)$, and it
  is shifted one position to the right by an element $a_j$ if and only
  if $j>i$ and $\pi_j<\pi_i$; it ends at position $\psi_i$. Hence,
  $f=\ob{S(f)}=\ob{w}$. Then $S$ is the inverse of $\ct$, that is,
  $S=\phiA$.
  \end{proof}

\begin{examplef}
  Let us recover the valid pair $P=\lambda^{-1}(f)$ out of
  $f=341183414$. In the first column, on the right, the elements of
  the center of $f$ are written in italic and $a$ is written in
  boldface.  The last column may be obtained by s-parking, as
  represented in Figure~\ref{fig2}.

$$\begin{array}{|cl|c|c|c|ccl|}
\hline
f&&a&b&c&f_Z&& \\
\hline
\overset{\scriptscriptstyle1}{3}\overset{\scriptscriptstyle2}{4}
\overset{\scriptscriptstyle3}{1}\overset{\scriptscriptstyle4}{1}
\overset{\scriptscriptstyle5}{8}\overset{\scriptscriptstyle6}{3}
\overset{\scriptscriptstyle7}{4}\overset{\scriptscriptstyle8}{1}\overset{\scriptscriptstyle9}{4}&
{\begin{smallmatrix}
\mathit{8}&&&\mathit{9}&&\\
\mathit{4}&&\mathit{6}&\mathit{7}&&\\[-4.5pt]
\mathit{3}&\tvs&\mathbf{1}&2&\tvs&\tvs&\tvs&5&\tvs\\
\end{smallmatrix}}&1&3&3&
\overset{\scriptscriptstyle3}{1}\overset{\scriptscriptstyle4}{1}
\overset{\scriptscriptstyle6}{3}\overset{\scriptscriptstyle7}{4}
\overset{\scriptscriptstyle8}{1}\overset{\scriptscriptstyle9}{4}&
= & \ob{843967}
\nccurve[nodesep=.05,ncurv=0.35,angleB=90,angleA=90]{-}{x1}{x2}\\
\hline
\overset{\scriptscriptstyle1}{1}\overset{\scriptscriptstyle2}{2}
\overset{\scriptscriptstyle3}{1}\overset{\scriptscriptstyle5}{6}
\overset{\scriptscriptstyle6}{2}\overset{\scriptscriptstyle7}{3}
\overset{\scriptscriptstyle9}{2}&
{\begin{smallmatrix}
    &\mathit{9}&\\
    \mathit{3}&\mathit{6}&\\[-4.5pt]
    \mathit{1}&\mathit{2}&\mathit{7}&\tvs&\tvs&\mathbf{5}&\tvs
\end{smallmatrix}}&5&6&4&
\overset{{\scriptscriptstyle1}}{1}\overset{{\scriptscriptstyle2}}{2}
\overset{{\scriptscriptstyle3}}{1}\overset{{\scriptscriptstyle6}}{2}
\overset{{\scriptscriptstyle7}}{3}\overset{{\scriptscriptstyle9}}{2}&
= & \ob{396712}
\nccurve[nodesep=.05,ncurv=0.35,angleB=90,angleA=90]{-}{y1}{y2}\\
\hline
\overset{{\scriptscriptstyle1}}{1}\overset{{\scriptscriptstyle2}}{2}
\overset{{\scriptscriptstyle5}}{3}\overset{{\scriptscriptstyle7}}{1}&
{\begin{smallmatrix}
\mathit{7}\\
\mathit{1}&\mathit{2}&\mathit{5}
\end{smallmatrix}}&{}-{}&{}-{}&{}-{}&
\overset{{\scriptscriptstyle1}}{1}\overset{{\scriptscriptstyle2}}{2}
\overset{{\scriptscriptstyle5}}{3}\overset{{\scriptscriptstyle7}}{1}&
= & \ob{7125}
\nccurve[nodesep=.05,ncurv=0.35,angleB=90,angleA=90]{-}{z1}{z2}\\[4.5pt]
\hline
\end{array}$$
\end{examplef}

In fact, as we know,
$f=\rnode{c8}{8}4\rnode{c3}{3}96\rnode{c7}{7}1\rnode{c2}{2}\rnode{c5}{5}
\nccurve[nodesep=.05,ncurv=0.35,angleB=90,angleA=90]{-}{c8}{c7}
\nccurve[nodesep=.05,ncurv=0.35,angleB=90,angleA=90]{-}{c3}{c2}
\nccurve[nodesep=.05,ncurv=0.35,angleB=90,angleA=90]{-}{c7}{c5}$, that
is $P=\big(843967125,\{[1,6],[3,8],[6,9] \}\big)$.

\end{document}